\documentclass[a4,10pt,twocolumn]{scrartcl}
\usepackage[top=2.5cm, bottom=2.5cm, left=1.6cm, right=1.6cm]{geometry}

\pdfoutput=1

\setlength{\columnsep}{20pt}

\input{myStandardPreamble_arXiv.tex}
\usepackage[english]{babel}
\usepackage[utf8x]{inputenc}
\usepackage{enumitem}					

\usepackage{helvet}
\usepackage[T1]{fontenc}
\usepackage{mathpazo}

\usepackage[hang]{footmisc}
\setlength\footnotemargin{1em}
\setfnsymbol{wiley}

\makeatletter
\renewcommand{\@biblabel}[1]{[#1]\hfill}
\makeatother

\makeatletter
\let\NAT@parse\undefined
\makeatother


\usepackage[format=plain,			
            labelsep=period,		
            font=small,			
            labelfont=bf,			
            skip=5pt			
            ]{caption}
            
\setlength{\belowcaptionskip}{5pt plus 2pt minus 2pt}

\usepackage[final,kerning=false,protrusion=false]{microtype}

\let\tmp\newinsert
\let\newinsert\newbox
\usepackage{floatrow}
\let\newinsert\tmp
\newfloatcommand{capbtabbox}{table}[][\FBwidth]

\usepackage{authblk}

\newtheorem{theorem}{Theorem}
\newtheorem{lem}{Lemma}

\newtheorem{rem}{Remark}

\newtheorem{assumption}{Assumption}

\let\oldTheorem\theorem
\renewcommand{\theorem}{\oldTheorem\normalfont}
\let\oldLemma\lem
\renewcommand{\lem}{\oldLemma\normalfont}
\let\oldCorollary\corollary
\renewcommand{\corollary}{\oldCorollary\normalfont}
\let\oldDefinition\definition
\renewcommand{\definition}{\oldDefinition\normalfont}
\let\oldRemark\rem
\renewcommand{\rem}{\oldRemark\normalfont}
\let\oldAssumption\assumption
\renewcommand{\assumption}{\oldAssumption\normalfont}
\let\oldExample\example
\renewcommand{\example}{\oldExample\normalfont}

\addtokomafont{paragraph}{\itshape}

\newcommand{\gradx}[2]{\nabla_{#2}{#1}(#2)}

\usepackage{xfrac}
\newcommand{\nsqrth}{h^{-1/2}}
\newcommand{\sqrth}{\sqrt{h}}
\newcommand{\sqrthdz}{h^{3/2}}
\newcommand{\fJ}{\mathfrak{f}}
\newcommand{\FJ}{F}
\newcommand{\boldmin}{\mathord{\begin{tikzpicture}[baseline=0ex, line width=2, scale=0.13]	\draw (0,0.7) -- (2,0.7);\end{tikzpicture}}}
\newcommand{\thinmin}{\mathord{\begin{tikzpicture}[baseline=0ex, line width=0.5, scale=0.13]	\draw (0,0.7) -- (2,0.7);\end{tikzpicture}}}
\newcommand{\thinmins}{\mathord{\begin{tikzpicture}[baseline=0ex, line width=0.5, scale=0.13]	\draw (0,0.7) -- (0.7,0.7);\end{tikzpicture}}}
\newlength\figureheight
\newlength\figurewidth

\newcommand{\jacob}[1]{#1}

\usepackage{cases}
\usepackage{algorithm} 
\usepackage{algpseudocode}
\usepackage{bm}

\newcommand{\red}[1]{{\color{red}#1}}
\colorlet{green60}{green!60!black} 
\colorlet{orange90}{orange!90!black} 

\definecolor{green}{rgb}{0,0.5,0}


\title{\LARGE \textbf
Derivative-Free Optimization Algorithms  \\based on Non-Commutative Maps
\footnote{This article is a slightly extended version of \cite{feiling2018dfo} with an extra~\Cref{fig:proof}.}
}

\setkomafont{section}{\Large}
\setkomafont{subsection}{\large}
\setkomafont{subsubsection}{\itshape}

\begin{document}

\date{}
\author[1,2]{Jan Feiling}
\author[1]{Amelie Zeller}
\author[1]{Christian Ebenbauer}
\affil[1]{Institute for Systems Theory and Automatic Control, University of Stuttgart, Germany \protect\\ \texttt{\small $\lbrace$jan.feiling@ist,st119167@stud,ce@ist$\rbrace$.uni-stuttgart.de}}
\affil[2]{Dr. Ing. h.c. F. Porsche AG, Stuttgart, Germany \protect\\ \texttt{\small jan.feiling@porsche.de} \protect\\[1em]}

\maketitle

\begin{abstract}
\textbf{Abstract.} A novel class of derivative-free optimization algorithms is developed. 
The main idea is to utilize certain non-commutative maps in order to approximate the gradient of the objective function. Convergence properties of the novel algorithms are established and simulation examples are presented.
\end{abstract}

\section{INTRODUCTION}\label{sec:intro}
In the course of continuously increasing computational power, optimization algorithms and optimization-based control 
play more and more a central role in solving control and real-time decision making problems. In many tasks, the resulting optimization problems are very challenging, i.e., they are high-dimensional, non-convex, non-smooth, or of stochastic nature. 
Hence, improving existing optimization algorithms and developing novel algorithms is of central importance.
An interesting class of algorithms are derivative-free algorithms, which typically need only evaluations of the objective function for optimization. Hence, derivative-free optimization algorithms are simple and appealing in many challenging applications, for example in real-time optimization, where only noisy measurements and no mathematical description of the objective function are available and therefore no derivative, i.e., no gradient can be computed directly. Consequently, derivative-free optimization and adaptive control techniques are often used in this applications \cite{ariyur2003real, guay2003adaptive, tan2010extremum, atta2015extremum, poveda2017distributed,poveda2017robust}. Moreover, in the field of machine learning, derivative-free optimization problems \cite{nesterov2017random, duchi2015optimal, torczon1997convergence, spall1992multivariate} gain renewed interest, which are often tackled with so-called stochastic gradient approximation methods \cite{spall1992multivariate}.
In this paper, we propose a novel class of derivative-free discrete-time optimization algorithms.
The key characteristic of the proposed algorithms is the use of deterministic non-commutative maps to evaluate the objective function
at certain points such that a gradient descent step is approximated. In comparison to \cite{nesterov2017random, duchi2015optimal, torczon1997convergence, spall1992multivariate} our proposed method is not a randomized or stochastic algorithm.
Interestingly and to the best of our knowledge, the use of such a deterministic non-commutative scheme in discrete-time optimization algorithms has not been studied so far in the optimization literature.

In the geometric control theory literature, however, the effect of non-commutative vector fields is of fundamental importance \cite{brockett2014early}  and has been used in control design since decades. Recently, non-commutative vector fields have also been used in the design of continuous-time optimization algorithms. For example, Lie brackets, i.e., the infinitesimal characterization of non-commutativity between two vector fields, have been used to approximate gradients and to design extremum seeking algorithms for unconstrained and constrained optimization and adaptive control problems, see, e.g., \cite{durr2013lie,durr2014extremum,grushkovskaya2017class,durr2013saddle,scheinker2014extremum}. Furthermore, in \cite{michalowsky2017lie}, Lie brackets have been exploited to tackle the problem of distributed optimization over directed graphs.


The proposed class of discrete-time derivative-free algorithms are inspired by these Lie bracket based continuous-time, explorative optimization methods, but our approach is not simply a discretization of continuous-time methods.
%
%
For example, utilizing  non-commutative maps based on Euler-integration steps lead to 
approximation results which are different from continuous-time Lie bracket approximation results (see, e.g., \cite{altafini2016nonintegrable}),
yet we show how they can be used for gradient approximation.
Indeed, as shown in the paper, suitable integration schemes need to be employed in order to approximate the results known from continuous-time methods.
One advantage which arises from utilizing non-commutative maps in gradient approximation is the robustness with respect to noisy objective functions as we will see in simulations. This is not observed if finite differences are used for gradient approximations, like it was done in \cite{popovic2006extremum}, where extremum seeking and a version of \cite{spall1992multivariate} is combined.
Overall, the contributions of this work are as follows: Firstly, utilizing non-commutative maps, we 
develop novel discrete-time gradient approximation schemes and corresponding derivative-free optimization algorithms. Secondly, we perform a convergence analysis of the proposed algorithms. Finally, we study the proposed algorithm in different simulation examples.
The sequel of the paper is structured as follows: In the next section we state the problem setup and give the main idea how the discrete-time derivative-free optimization algorithms are derived. Section III contains the main lemmas and theorems providing the gradient approximation and the asymptotic convergence behavior of the algorithms. In Section IV we analyze the algorithm in various simulation examples. Finally, we give a summary and outlook of further work.

\textit{Notation.}
$\mathbb{N}_{0}$ denote the natural numbers including zero, i.e., $\{0,1,2,\ldots\}$. $C^n$ with $n\in\mathbb{N}_0$ stands for the set of $n$-times differentiable functions. The norm $|\cdot|$ denotes the Euclidean norm.  A compact set denoted by $\mathcal{S}_{x^*}^{\delta}$ with $\delta \in (0,\infty)$ and $x^*\in\mathbb{R}^n$ is defined as $\{ x\in \mathbb{R}^n: |x-x^*|\le\delta \}$.
The gradient of a function $J\in C^2:\mathbb{R}^n\rightarrow \mathbb{R}$ is represented by $\gradx{J}{x}:=(\sfrac{\partial J}{\partial x}(x))^\top$ and its Hessian by $\nabla^2_x J(x) := \sfrac{\partial^2 J}{\partial x^2}(x)$.
A function $f(\epsilon):\mathbb{R}\rightarrow\mathbb{R}^n$ is said to be of order $\mathcal{O}(\epsilon)$, if there exist $k,\bar{\epsilon}\in\mathbb{R}$ such that $|f(\epsilon)| \le k\epsilon$, for all $\epsilon \in [0,\bar{\epsilon}]$. 
The operator $k \bmod n$ denotes the modulo operation.

\section{MAIN IDEA AND ALGORITHM}\label{sec:prob_statement}
The aim of this section is to motivate and to develop the basic steps of the proposed derivative-free optimization algorithms for unconstrained multidimensional optimization problems of the form
\begin{align}
\min_{x \in \mathbb{R}^n} J(x),
\label{eq:minJ}
\end{align}
where $J:\mathbb{R}^n \rightarrow \mathbb{R}$ is the so-called objective or cost function.

\subsection{Non-Commutative Vector Fields and Continuous-time Gradient Approximation}
Consider the scalar continuous-time system
\begin{align}
	\dot{x}(t) = f_1(x(t))u_1(t)+f_2(x(t))u_2(t)
	\label{eq:basic_ESS}
\end{align} 
with state $x(t)\in \mathbb{R}$, vector fields $f_1,f_2\in C^2:\mathbb{R}\rightarrow \mathbb{R}$
and periodic inputs $u_i(t) = u_i(t+T) \in \mathbb{R}$, $i=1,2$, with period $T\in \mathbb{R}$.
It is very well-known that, if for example 
periodic rectangular-shaped input signals such as
\begin{align}
&u_1(t) = \begin{cases} 
~~1, & t\in  [0,h) \\
~~0, &  t\in [h,2h) \\
-1, &  t\in [2h,3h) \\
~~0, &  t\in [3h,4h) 
\end{cases}, 
\end{align}
\vspace*{-3mm}
\begin{align}
	 &u_2(t) = \begin{cases} 
	~~0, &  t\in [0,h) \\
	~~1, &  t\in [h,2h) \\
	~~0, &  t\in [2h,3h) \\
	-1, &   t\in [3h,4h) 
	\end{cases}
\end{align}
are applied to system \eqref{eq:basic_ESS}, as shown in~\Cref{fig:ncm}(a), then, for $h>0$ small, a second order Taylor expansion reveals
\begin{align}
	x(4h)-x(0) = h^2[f_1,f_2](x(0))+\mathcal{O}(h^{3}),
	\label{eq:brockett}
\end{align}
where $[f_1,f_2]=\frac{\partial f_2}{\partial x} f_1 -\frac{\partial f_1}{\partial x} f_2$ is the Lie bracket between
the vector fields $f_1,f_2$. Scaling the vector fields $f_1,f_2$ by $\frac{1}{\sqrt{h}}$ gives $x(4h)-x(0) = h[f_1,f_2](x(0))+\mathcal{O}(\sqrthdz)$. 
Then, by successively repeating \eqref{eq:brockett} with new initial values $x(lh),\ l=1,2,\ldots$ and taking the limit $h\rightarrow 0$ leads to the approximate (Lie bracket) system
\begin{align}
	\dot{\bar{x}}(t) = [f_1,f_2](\bar{x}(t)). 
	\label{eq:lie_sys}
\end{align}
Regarding the (scalar) optimization problem \eqref{eq:minJ}, one desirable methodology to
find an extremum, would be a simple continuous-time
 gradient descent equation $\dot{x} = -\nabla_xJ(x)$.  Interestingly, by defining 
\begin{align}
\label{fancyf}
 \fJ_i(x) := f_i(J(x)),
\end{align}
$i=1,2$, and choosing $\fJ_1,\fJ_2: \mathbb{R}\rightarrow \mathbb{R}$ as 
\begin{alignat}{2}
	&\fJ_1(x) = J(x),\qquad &&\fJ_2(x)=1,\quad \text{or} \label{eq:vec_simple} \\
	&\fJ_1(x) = \sin(J(x)),\quad &&\fJ_2(x) = \cos(J(x)),
	\label{eq:vec_sincos}
\end{alignat}
we have $[\fJ_1,\fJ_2](x) = -\nabla_xJ(x)$. Hence, by substitution $f_i$ by $h^{-1/2}\fJ_i$, $i=1,2$,
in \eqref{eq:brockett} and \eqref{eq:basic_ESS}, respectively,
we obtain by \eqref{eq:basic_ESS} a derivative-free approximation of a gradient flow.
This observation has been utilized and generalized for the design and analysis of continuous-time extremum seeking algorithms, see \cite{durr2013lie,grushkovskaya2017class,scheinker2014extremum} and references therein. Especially, \cite{grushkovskaya2017class} presents a broad class of generating vector fields $\fJ_1,\fJ_2$ for gradient approximation.
Non-commutativity comes into play since Lie brackets naturally arise when studying the commutativity of flows. In particular, if we denote with $\varphi_{f_i}^{t}(x_0):\mathbb{R}^n\rightarrow \mathbb{R}^n$ the flow map of the vector field $f_i:\mathbb{R}^n\rightarrow\mathbb{R}^n$ at time $t+t_0$ and initial condition $x(t_0)=x_0\in\mathbb{R}^n$, then we can express equation \eqref{eq:brockett} as
\begin{align}
		x(4h) = \left(\varphi_{-f_2}^h \circ \varphi_{-f_1}^h \circ \varphi_{f_2}^h \circ \varphi_{f_1}^h\right)(x_0),
	\label{eq:dtes}
\end{align}
as shown in~\Cref{fig:ncm}(b). It is easy to see that $x(4h)=x(0)$ if and only if the flow maps $\varphi_{f_1}^h$,$\varphi_{f_2}^h$ commute, since the flow is a bijection with $(\varphi_{f_1}^h)^{-1}=\varphi_{-f_1}^h$ and hence 
$x(4h)=x(0)$ if and only if $(\varphi_{f_2}^h \circ \varphi_{f_1}^h)(x(0))=(\varphi_{f_1}^h \circ \varphi_{f_2}^h)(x(0))$.
Further, it can be shown that the flow maps commute if and only if the Lie bracket between $f_1,f_2$ vanishes \cite{nijmeijer1990nonlinear}, hence the Lie bracket is an infinitesimal measure for the commutativity of vector fields.

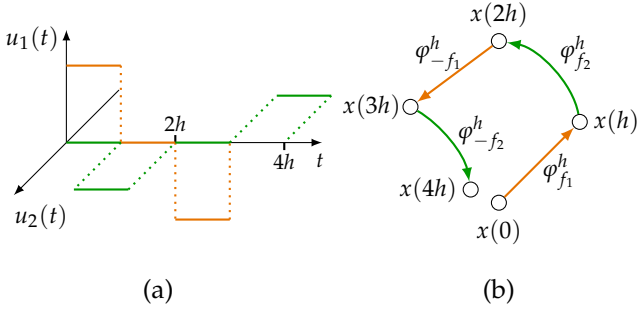
\begin{figure}[t]
	\centering
	\begin{tikzpicture}[>=latex]
	\node[draw=black,circle,name=c1, inner sep = 2pt] at (0,0) {};
	\node[draw=black,circle,name=c2, inner sep = 2pt, above right = 1cm and 1cm of c1.center ] {};
	\node[draw=black,circle,name=c3, inner sep = 2pt,above left = 1cm and 1cm of c2.center ] {};
	\node[draw=black,circle,name=c4, inner sep = 2pt, below left = 0.8cm and 1.1cm of c3.center ] {};
	\node[draw=black,circle,name=c5, inner sep = 2pt, above left = 0.1cm and 0.3cm of c1.center ] {};
	%
	%
	\draw[->,thick,color = orange90] (c1) to (c2);
	\draw [->,thick,color = green60] (c2) to [out=100,in=350,looseness=0.8] (c3);
	\draw [->,thick,color = orange90] (c3) to (c4);
	\draw [->,thick,color = green60] (c4) to [out=330,in=100,looseness=0.8] (c5);

	\node[below = -0.01cm of c1.south ] {\small{$x(0)$}};
	\node[right = -0.05cm of c2.east ] {\small{$x(h)$}};
	\node[above = -0.05cm of c3.north ] {\small{$x(2h)$}};
	\node[left = -0.1cm of c4.west ] {\small{$x(3h)$}};
	\node[left = -0.05cm of c5.west ] {\small{$x(4h)$}};
	\node[below = 1cm of c1.north] {(b)};
	\node[below left = 1cm and 4.2cm of c1.north] {(a)};
	
	\node[above right = 0.05cm and 0.45cm of c1.center ] {\small{$\varphi_{f_1}^{h}$}};
	\node[above left = 0.55cm and -0.3cm of c2.center ] {\small{$\varphi_{f_2}^{h}$}};
	\node[below left = -0.2cm and 0.35cm of c3.center ] {\small{$\varphi_{-f_1}^{h}$}};
	\node[below right = -0.01cm and 0.5cm of c4.center ] {\small{$\varphi_{-f_2}^{h}$}};
	
	\node[name=1, inner sep = 0pt] at (-5.75,0.8) {};
	\node[name=2, right = 3.4cm of 1.center ] {};
	\node[name=3, above = 1.5cm of 1.center ] {};
	\node[name=4, below = 1.0cm of 1.center ] {};
	\node[name=5, above = 0.9cm of 1.center ] {};
	\node[name=6, right = 0.6cm of 5.center ] {};
	\node[name=7, right = 0.6cm of 1.center ] {};
	\node[name=8, right = 0.6cm of 7.center ] {};
	\node[name=9, below = 0.9cm of 8.center ] {};
	\node[name=10, right = 0.6cm of 9.center ] {};
	\node[name=11, above = 0.9cm of 10.center ] {};
	
	\node[name=12, right = 0.6cm of 1.center ] {};
	\node[name=13, below left = 0.5cm and 0.5cm of 12.center ] {};
	\node[name=14, right = 0.6cm of 13.center ] {};
	\node[name=15, above right = 0.5cm and 0.5cm of 14.center ] {};
	\node[name=16, right = 0.6cm of 15.center ] {};
	\node[name=17, above right = 0.5cm and 0.5 of 16.center ] {};
	\node[name=18, right = 0.6cm of 17.center ] {};
	\node[name=19, below left = 0.5cm and 0.5cm of 18.center ] {};
	
	\node[name=20, below left = 0.7cm and 0.7cm  of 1.center ] {};
	\node[name=21, above right = 0.7cm and 0.7cm  of 1.center ] {};
	
	%
	\draw[thick] (8.center) to ([yshift=0.1cm]8.center);
	\draw[thick] (19.center) to ([yshift=-0.1cm]19.center);
	\draw[->] (1.center) to (2);
	\draw[->] (1.center) to (3);
	\draw[->] (1.center) to (20);
	\draw (1.center) to (21);
	\draw[thick,color = orange90] (5.center) to (6.center);
	\draw[dotted,thick,color = orange90] (6.center) to (7.center);
	\draw[thick,color = orange90] (7.center) to (8.center);
	\draw[dotted,thick,color = orange90] (8.center) to (9.center);
	\draw[thick,color = orange90] (9.center) to (10.center);
	\draw[dotted,thick,color = orange90] (10.center) to (11.center);
	
	\draw[thick,color = green60] (1.center) to (12.center);
	\draw[dotted,thick,color = green60] (12.center) to (13.center);
	\draw[thick,color = green60] (13.center) to (14.center);
	\draw[dotted,thick,color = green60] (14.center) to (15.center);
	\draw[thick,color = green60] (15.center) to (16.center);
	\draw[dotted,thick,color = green60] (16.center) to (17.center);
	\draw[thick,color = green60] (17.center) to (18.center);
	\draw[dotted,thick,color = green60] (18.center) to (19.center);

	\node[below left = -0.05cm and -0.15cm of 3.west ] {\small{$u_1(t)$}};
	\node[below right = -0.25cm and 0.01 cm of 20.south ] {\small{$u_2(t)$}};
	\node[above = -0.1cm of 8.north ] {\footnotesize{$2h$}};
	\node[below = -0.1cm of 19.south ] {\footnotesize{$4h$}};
	\node[below right = -0.1cm and 0.3 cm of 19.south ] {\footnotesize{$t$}};

	\end{tikzpicture}
	\caption{Switching vector fields: (a) periodic inputs $u_1(t),u_2(t)$ depicted for one period $4h$; (b) non-commutative flow maps of vector fields $\pm f_1,\pm f_2$.}
	\label{fig:ncm}
\end{figure}

\subsection{Non-Commutative Maps}

In principle, one could just numerically integrate \eqref{eq:basic_ESS} with appropriate vector fields and inputs in order to get a derivative-free discrete-time algorithm. However, rectangular-shaped inputs as shown in~\Cref{fig:ncm}(a) often lead to bad numerical behavior in combination with standard numerical integration schemes. Hence, in continuous-time algorithms \cite{durr2013lie, grushkovskaya2017class}, the rectangular-shaped inputs are often replaced by sinusoidal inputs in order to avoid these problems.  
A key idea of the proposed discrete-time optimization algorithm is to incorporate non-commutative maps more directly
into a tailored integration scheme, which approximates a gradient descent step with the help of composite maps of the form \eqref{eq:dtes}.
Hereby, two challenges arise: i) How to efficiently obtain the maps? 
ii) What are suitable discrete-time input functions in an $n$-dimensional optimization problem?

i) Maps. We construct the composition of flow maps by suitable numerical integration methods, where we consider two approaches. 
Firstly, we utilize the  \textit{Euler}-integration method.
It does not lead to a Lie bracket approximation as in \eqref{eq:brockett}, 
since the Euler-integration method is a first order method while Lie brackets are second order
effects. However, we show that the Euler-integration method still can be used for gradient approximation.
1Secondly, we utilize the so-called \textit{Heun}-integration method, also known as the trapezoidal-integration method, which preserve properties like \eqref{eq:brockett} (see~\Cref{lem:algo} in~\Cref{sec:main_part}). 
Accordingly, for an ODE $\dot{x}=g(x)$ with $x\in\mathbb{R}^n$ and vector field $g:\mathbb{R}^n\rightarrow \mathbb{R}^n$ we use the Euler- and Heun-integration steps given by
\begin{align}
E_g^h(x_k)&:= x_k + hg(x_k), \quad \text{and}
\label{eq:eulerint} \\
H_g^h(x_k) &:= x_k + \frac{h}{2}\left(g(x_k)+g(x_k+hg(x_k))\right),
\label{eq:heun_int}
\end{align}
as an approximation of the flow map $\varphi_g^h$.

ii) Inputs. As depicted in~\Cref{fig:ncm}(a), we consider periodic rectangular-shaped inputs. 
These inputs can simply be interpreted as switching  $\fJ_1,\fJ_2:\mathbb{R}^n\rightarrow \mathbb{R}$ at every time step (interval) $k$ ($h$). Hence, a switched vector field is defined as 
\begin{align}
	g_k(x) = \begin{cases}
	~~\fJ_1(x)e_l &\text{if}\ k\bmod 4 = 0 \\
	~~\fJ_2(x)e_l &\text{if}\ k\bmod 4 = 1 \\
	-\fJ_1(x)e_l & \text{if}\ k\bmod 4 = 2 \\
	-\fJ_2(x)e_l & \text{if}\ k\bmod 4 = 3 
	\end{cases},
	\label{eq:seq_switch_seq_n}
\end{align}
where $l = k/4 \bmod n+1$ with $e_l$ the $l$-th unit vector in $\mathbb{R}^n$. Functions $\fJ_1,\fJ_2:\mathbb{R}^n\rightarrow \mathbb{R}$, as for example in \eqref{eq:vec_simple} and \eqref{eq:vec_sincos} are components of vector fields $g_k(x) = \fJ_i(x)e_l$, but for the sake of convenience we sometimes call $\fJ_1,\fJ_2$ vector fields and use the notation $[\fJ_1,\fJ_2]=\nabla_x\fJ_2 \fJ_1-\nabla_x\fJ_1 \fJ_2$. Regarding \eqref{eq:seq_switch_seq_n}, one can think of executing four steps in every coordinate direction successively, hence the switching rule represents a coordinate wise approximation scheme. To keep the main idea clear, we consider only this single switching function and refer to further work where we will investigate more complex switching policies. 

\subsection{Derivative-free Optimization Algorithms}
Combining the integration steps \eqref{eq:eulerint} and \eqref{eq:heun_int}, respectively, with the sequential switching rule \eqref{eq:seq_switch_seq_n} and scaling 
the vector fields $\fJ_1,\fJ_2$ with $\nsqrth$, as discussed in the previous section, results in our discrete-time derivative-free optimization algorithms:
\begin{subnumcases} {x_{k+1} = M^{\sqrth}_{g_k}(x_k) = \label{eq:algos}}
\hspace{0pt}E_{g_k}^{\sqrth}(x_k) \label{eq:algo_euler}  \\
\hspace{0pt}H_{g_k}^{\sqrth}(x_k). \label{eq:algo}
\end{subnumcases}
As shown in the next section, every four steps (scalar case)
\begin{align}
	x_{k+4} = (M^{\sqrth}_{g_{k+3}} \circ M^{\sqrth}_{g_{k+2}}  \circ  M^{\sqrth}_{g_{k+1}} \circ M^{\sqrth}_{g_{k}} )(x_k)
	\label{eq:algo_flow}
\end{align}
a gradient descent step under suitable conditions on the vector fields $\fJ_1,\fJ_2$ is approximated. 
Summarizing, two algorithms presented in \eqref{eq:algos} and also in~\Cref{alg:heun4}, were derived by utilizing the effect of non-commutative maps, introduced by a sequential switching rule of vector fields and proper integration methods.
In the next section we analyze both schemes by showing the gradient approximation property and prove asymptotic convergence to a neighborhood of an extremum.  

\begin{algorithm}\label{alg:four_substeps}
	\caption{Derivative-free optimization algorithm with non-commutative maps \eqref{eq:algo}}
	\begin{algorithmic}[1]
		\State \textbf{Required:} $x_0$,$h$,$\fJ_1,\fJ_2$, stop criterion
		\State \textbf{Init:} $k=0$
		\While {stop criterion is not fulfilled}
		\State $l \leftarrow n \bmod k/4 +1$
		\State $e_l \leftarrow [0_{l},1,0_{n-1-l}]^\top$
		\State
		$g_k(x) \leftarrow 
		\begin{cases}
		\ \  \fJ_1(x)e_l	&\textbf{if }k\bmod 4= 0 \\ 
		\ \  \fJ_2(x)e_l	&\textbf{if }k\bmod 4 = 1 \\ 
		-\fJ_1(x)e_l	&\textbf{if }k\bmod 4 = 2 \\ 
		-\fJ_2(x)e_l 	&\textbf{if }k\bmod 4 = 3
		\end{cases}$
		\State $c_1 \leftarrow  g_k(x_k)$
		\State $c_2 \leftarrow  g_k(x_k+\sqrth c_1)$
		\State $x_{k+1} \leftarrow x_k + \frac{\sqrth}{2}(c_1+c_2)$
		\State $k\leftarrow k+1$
		\EndWhile
		\State \textbf{return} \textit{$[x_{0},x_1,\ldots]$}
	\end{algorithmic}
\label{alg:heun4}
\end{algorithm}

\section{MAIN RESULTS}\label{sec:main_part}
In the sequel, basic convergence and approximation properties of both 
discrete-time derivative-free optimization
algorithms presented in \eqref{eq:algos} are analyzed. Note, convergence rate results of derivative-free algorithms are almost absent in literature \cite{nesterov2017random}.
The  algorithms are applicable for broad classes of objective functions $J$, including  non-differentiable objective functions, given a choice of proper algorithmic parameters 
$x_0,h,\fJ_1,\fJ_2$, as we observe in simulations,
since only function evaluations of $J$ are necessary.
However, for the approximation and convergence analysis we assume:
\begin{assumption}
	\label{A1}
	The objective function $J:\mathbb{R}^n\rightarrow\mathbb{R}$ in \eqref{eq:minJ}
		and the vector fields $\fJ_1,\fJ_2:\mathbb{R}^n \rightarrow \mathbb{R}$ in \eqref{fancyf}		
		are class $C^2$ functions. $\hfill {\footnotesize \blacktriangle}$
\end{assumption}
First, we state two lemmas, which show the approximated optimization direction after $4n$ steps, induced by the non-commutative maps of our proposed algorithms \eqref{eq:algos}: 
\begin{lem}
	\label{lem:algo_euler}
	Let~\Cref{A1} hold and consider algorithm \eqref{eq:algo_euler}. Then the evolution of $x_k\in\mathbb{R}^n$ at step $k+4n$ is given by  
	\begin{align}
	x_{k+4n} = x_k  + h & \left(  [\fJ_1,\fJ_2](x_k) - \nabla_x\fJ_1(x_k)\fJ_1(x_k) \right. \nonumber \\ & \left.- \nabla_x\fJ_2(x_k)\fJ_2(x_k)\right)  +  \mathcal{O}(\sqrthdz).
	\label{eq:lem_algo_euler}
	\end{align}
	for all $k\in \mathbb{N}_{0}$
	$\hfill {\footnotesize \blacktriangle}$
\end{lem}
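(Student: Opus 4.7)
My plan is to reduce the $4n$-step evolution to $n$ single-coordinate four-step blocks and analyse each block by Taylor expansion in $\Delta := \sqrt{h}$. I first treat the aligned case $k = 4m$; the misaligned case $k \bmod 4 \ne 0$ is handled at the end. In the aligned case the iterates from $x_k$ to $x_{k+4n}$ decompose into $n$ consecutive blocks, the $\mu$-th block acting entirely in some coordinate direction $e_{l_\mu}$, where $(l_0,\dots,l_{n-1})$ is a cyclic permutation of $(1,\dots,n)$ determined by $m$. Write $y_0^{(\mu)}$ for the starting point of block $\mu$, so $y_0^{(0)} = x_k$.

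For a single block I write its four updates as $y_{r+1} = y_r + \Delta\,\epsilon_r \fJ_{i_r}(y_r)\,e_l$ with $(\epsilon_r,i_r) = (+,1),(+,2),(-,1),(-,2)$ for $r=0,1,2,3$, and Taylor-expand each intermediate $\fJ_{i_r}(y_r)$ about $y_0$ using $|y_r - y_0| = \mathcal{O}(\Delta)$. The zeroth-order contributions cancel, since they sum to $\fJ_1(y_0) + \fJ_2(y_0) - \fJ_1(y_0) - \fJ_2(y_0) = 0$, and the first-order pieces combine into $y_4 - y_0 = h\,e_l [\partial_l \fJ_2\,\fJ_1 - \partial_l \fJ_1\,\fJ_1 - \partial_l \fJ_1\,\fJ_2 - \partial_l \fJ_2\,\fJ_2](y_0) + \mathcal{O}(h^{3/2})$.

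Since each block displaces the state by $\mathcal{O}(h)$, we have $y_0^{(\mu)} = x_k + \mathcal{O}(h)$, so evaluating the per-block expression at $x_k$ rather than at $y_0^{(\mu)}$ incurs only an $\mathcal{O}(h^2)$ error per block. Summing the $n$ contributions and using the identity $\sum_{l=1}^n e_l\,\partial_l = \nabla_x$ yields the surviving expression $h[(\nabla_x \fJ_2)\fJ_1 - (\nabla_x \fJ_1)\fJ_1 - (\nabla_x \fJ_1)\fJ_2 - (\nabla_x \fJ_2)\fJ_2](x_k) + \mathcal{O}(h^{3/2})$; combining the first and third summands via the definition $[\fJ_1,\fJ_2] = \nabla_x \fJ_2\,\fJ_1 - \nabla_x \fJ_1\,\fJ_2$ then gives \eqref{eq:lem_algo_euler}.

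For arbitrary $k$ with $k \bmod 4 \ne 0$ the four-step group of the wrap-around coordinate is split between the head and tail of the $4n$-window, and the clean block decomposition breaks. The fix is a global Taylor expansion of $x_{k+4n} - x_k$ as a single sum plus a double sum indexed by ordered step pairs; the first-order piece still vanishes because each coordinate is visited exactly once per $(\epsilon,i) \in \{\pm 1\}\times\{1,2\}$, and for cross-coordinate pairs a short case analysis shows the inner sum over preceding $l'$-steps either is identical across all outer $l$-steps (and then factors into a complete four-cycle sum at coordinate $l$, which vanishes) or runs over an empty or complete cycle (and vanishes on its own). Tracking this phase-shifted cancellation is the main obstacle; once it is in place the same rearrangement as in the aligned case recovers \eqref{eq:lem_algo_euler}.
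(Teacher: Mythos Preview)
Your approach is essentially the paper's: a per-coordinate four-step Taylor expansion in $\sqrt{h}$ yielding the same $\mathcal{O}(h)$ residual, then summation across coordinates. You are in fact more careful than the paper, which simply asserts the reduction to $k\bmod 4=0$ as ``without loss of generality'' and does not explicitly argue that the $\mathcal{O}(h)$ base-point shift between successive blocks is absorbed into the remainder; your explicit treatment of both points (and the global double-sum cancellation for misaligned $k$) fills gaps the paper leaves implicit.
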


\begin{lem}
	\label{lem:algo}
	Let~\Cref{A1} hold and consider algorithm \eqref{eq:algo}. Then the evolution of $x_k\in\mathbb{R}^n$ at step $k+4n$ is given by 
	\begin{align}
		x_{k+4n} = x_k + h [\fJ_1,\fJ_2](x_k) + \mathcal{O}(\sqrthdz).
		\label{eq:lem_algo}
	\end{align}
	for all $k\in \mathbb{N}_{0}$ $\hfill {\footnotesize \blacktriangle}$
\end{lem}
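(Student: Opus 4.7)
The plan is to reduce the discrete $4n$-step composition to the continuous-time Lie-bracket expansion \eqref{eq:brockett} by exploiting that Heun's method is a second-order Runge--Kutta scheme.

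First, I would show that each Heun step matches the exact flow up to $\mathcal{O}(h^{3/2})$. A direct Taylor expansion of \eqref{eq:heun_int} gives, for any $C^2$ vector field $g$,
$$H_g^{\sqrth}(x) = x + \sqrth\,g(x) + \tfrac{h}{2}\nabla_x g(x)\,g(x) + \mathcal{O}(h^{3/2}),$$
which matches the Taylor expansion of the exact flow $\varphi_g^{\sqrth}(x)$ through order $h$. By Assumption~\ref{A1}, this estimate is uniform on any compact neighborhood of $x_k$, and since each factor $\varphi_g^{\sqrth}$ has Jacobian $I + \mathcal{O}(\sqrth)$ (and thus Lipschitz constant $1 + \mathcal{O}(\sqrth)$), composing the fixed number $4n$ of Heun steps equals composing the $4n$ corresponding exact flows modulo an accumulated error of order $\mathcal{O}(h^{3/2})$.

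Second, I would analyze one aligned block of four exact flows. For $k\equiv 0\pmod{4}$ and $l=1,\ldots,n$, the four consecutive flows at indices $j,\ldots,j+3$ with $j=k+4(l-1)$ apply $\fJ_1 e_l,\fJ_2 e_l,-\fJ_1 e_l,-\fJ_2 e_l$. Equation \eqref{eq:brockett} applied with step $\sqrth$ (so the leading $\tau^2$ coefficient becomes $h$) gives
$$x_{j+4} = x_j + h\,[\fJ_1 e_l,\fJ_2 e_l](x_j) + \mathcal{O}(h^{3/2}).$$
A short computation from the bracket definition shows $[\fJ_1 e_l,\fJ_2 e_l](x) = e_l\bigl(\fJ_1(x)\partial_{x_l}\fJ_2(x) - \fJ_2(x)\partial_{x_l}\fJ_1(x)\bigr)$, so summing over $l$ yields $\sum_{l=1}^n [\fJ_1 e_l,\fJ_2 e_l](x) = \fJ_1(x)\nabla_x\fJ_2(x) - \fJ_2(x)\nabla_x\fJ_1(x) = [\fJ_1,\fJ_2](x)$ in the abuse-of-notation sense used by the paper.

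Third, I would telescope over the $n$ coordinates. Because $x_{k+4(l-1)} = x_k + \mathcal{O}(h)$ along the iteration and each coordinate-wise bracket is Lipschitz by Assumption~\ref{A1}, freezing its argument at $x_k$ costs only $\mathcal{O}(h^2)\subset\mathcal{O}(h^{3/2})$ per block; the $n$ block identities then sum to the claimed $x_{k+4n} = x_k + h[\fJ_1,\fJ_2](x_k) + \mathcal{O}(h^{3/2})$ whenever $k$ is a multiple of four. For arbitrary $k\in\mathbb{N}_0$ the $4n$ steps still realise a full $\pm$-cycle for each coordinate, with at most one coordinate $l_0$ having its cycle split into a leading and a trailing contiguous piece separated by the other coordinates' full blocks. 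The main technical obstacle is to verify that this interleaving contributes no additional $\mathcal{O}(h)$ term. The cleanest way is via a Baker--Campbell--Hausdorff expansion of the $4n$-fold composition: the first-order part vanishes because each full $\pm$-cycle telescopes to zero as a vector field, the intra-coordinate pairwise brackets collect to $2\,[\fJ_1 e_l,\fJ_2 e_l]$ per coordinate, and the cross-coordinate pairwise brackets cancel in pairs thanks to the $\pm$-antisymmetry inside each full cycle. Once this cancellation is established, the argument for general $k$ reduces to the aligned case, and the uniformity of all $\mathcal{O}(h^{3/2})$ constants follows from the uniform $C^2$ bounds on $\fJ_1,\fJ_2$ on any compact set containing the $4n$-step trajectory.
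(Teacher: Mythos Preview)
Your proof is correct but takes a genuinely different route from the paper. The paper's argument is a direct brute-force Taylor expansion of the four successive Heun steps in one coordinate: it writes out $x_{k+1},x_{k+2},x_{k+3},x_{k+4}$ explicitly, each time expanding the inner $\fJ_i(\,\cdot\,)$ to first order and collecting the $\mathcal{O}(h)$ terms, until the bracket $F_2^k\fJ_1^k-F_1^k\fJ_2^k$ drops out; then it repeats coordinate by coordinate. Your approach instead factors the problem into two independent pieces: (i) Heun with step $\sqrt{h}$ agrees with the exact flow $\varphi_g^{\sqrt{h}}$ to $\mathcal{O}(h^{3/2})$ because it is a second-order Runge--Kutta method, and (ii) the four-fold flow composition is handled by the known continuous-time identity \eqref{eq:brockett}. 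This modular decomposition is cleaner and would carry over verbatim to any second-order one-step integrator, whereas the paper's computation is tied to the specific form of the Heun update.

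On the ``for all $k\in\mathbb{N}_0$'' clause: the paper simply asserts ``w.l.o.g.\ $k\bmod 4=0$ and $k/4\bmod n=0$'' without justification, while you actually engage with the shifted case via a BCH expansion. Your cancellation argument is correct (each full coordinate block sums to the zero vector field, killing all cross-coordinate second-order brackets; the intra-coordinate brackets of any cyclic shift of $(+\fJ_1,+\fJ_2,-\fJ_1,-\fJ_2)$ still sum to $2[\fJ_1,\fJ_2]$), though it is stated somewhat tersely. A shorter alternative that sidesteps BCH entirely: if $G$ denotes the $4n$-step composition starting at $k\equiv 0$ and $G'$ the one starting at $k+1$, then by $4n$-periodicity $G'=\Phi_0\circ G\circ\Phi_0^{-1}$ with $\Phi_0$ a single $\sqrt{h}$-step map; since $G(x)=x+hB(x)+\mathcal{O}(h^{3/2})$ and $D\Phi_0=I+\mathcal{O}(\sqrt{h})$, a one-line expansion gives $G'(x)=x+hB(x)+\mathcal{O}(h^{3/2})$ as well, and induction on the shift finishes the job.
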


The proofs of~\Cref{lem:algo_euler} and~\Cref{lem:algo} are given in~\Cref{apx:algo_proof_euler} and~\Cref{apx:algo_proof}, respectively.
We observe that the Heun-integration steps $H_{g_k}^{\sqrth}(x_k)$ preserve the Lie bracket approximation property, as shown in \eqref{eq:lem_algo}. 
However, as mention in Section 1, successively applying the Euler-integration steps $E_{g_k}^{\sqrth}(x_k)$ reveals the Lie bracket but additional terms of the same order $\mathcal{O}(h)$  are present, as shown in \eqref{eq:lem_algo_euler}. 
Interestingly, choosing proper vector fields $\fJ_1,\fJ_2$, \emph{both} algorithms approximate a gradient descent method:
\begin{theorem}
	\label{cor:euler}
	Let~\Cref{A1} hold and consider the algorithms in \eqref{eq:algos}
	with the  pair of vector fields \eqref{eq:vec_sincos}.  
	Then the evolution of $x_k\in\mathbb{R}^n$ at step $k+4n$ is given by 
	\begin{align}
			x_{k+4n} = x_k - h\nabla_xJ(x_k)+\mathcal{O}(\sqrthdz)
		\label{eq:grad_desc}
	\end{align}
	for all $k\in \mathbb{N}_{0}$ $\hfill {\footnotesize \blacktriangle}$
\end{theorem}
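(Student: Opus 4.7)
The plan is to substitute the sinusoidal pair $\fJ_1(x)=\sin(J(x))$, $\fJ_2(x)=\cos(J(x))$ from \eqref{eq:vec_sincos} directly into the approximation formulas supplied by \Cref{lem:algo_euler} and \Cref{lem:algo}, and to verify that in both cases the right-hand side collapses to $x_k - h\,\nabla_x J(x_k) + \mathcal{O}(\sqrthdz)$. The entire argument is then a short trigonometric calculation; no new convergence or integration analysis is required, since the heavy lifting has already been done in the two lemmas.

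First I would apply the chain rule to obtain $\nabla_x \fJ_1(x) = \cos(J(x))\,\nabla_x J(x)$ and $\nabla_x \fJ_2(x) = -\sin(J(x))\,\nabla_x J(x)$. Plugging these into the Lie bracket expression introduced after \eqref{eq:seq_switch_seq_n}, I get $[\fJ_1,\fJ_2](x) = \nabla_x\fJ_2(x)\,\fJ_1(x) - \nabla_x\fJ_1(x)\,\fJ_2(x) = -\bigl(\sin^2(J(x))+\cos^2(J(x))\bigr)\,\nabla_x J(x) = -\nabla_x J(x)$, where the Pythagorean identity is the key step. Inserting this into \eqref{eq:lem_algo} of \Cref{lem:algo} immediately yields \eqref{eq:grad_desc} for the Heun-based algorithm \eqref{eq:algo}.

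For the Euler-based algorithm \eqref{eq:algo_euler} I additionally need to check that the extra drift terms in \eqref{eq:lem_algo_euler} vanish. Using the same chain-rule expressions, $\nabla_x\fJ_1(x)\,\fJ_1(x) = \sin(J(x))\cos(J(x))\,\nabla_x J(x)$ and $\nabla_x\fJ_2(x)\,\fJ_2(x) = -\sin(J(x))\cos(J(x))\,\nabla_x J(x)$, whose sum is zero. Consequently the bracketed expression in \eqref{eq:lem_algo_euler} reduces to $[\fJ_1,\fJ_2](x_k) = -\nabla_x J(x_k)$, giving the same conclusion \eqref{eq:grad_desc}.

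No serious obstacle is anticipated. The only point worth emphasising is that the pair \eqref{eq:vec_sincos} is chosen precisely so that two things happen simultaneously: the Lie bracket reproduces $-\nabla_x J$ via $\sin^2+\cos^2=1$, and the unwanted first-order self-drift terms $\nabla_x\fJ_i\,\fJ_i$ produced by the cruder Euler scheme cancel by the antisymmetry of $\sin(J)\cos(J)$ in the indices $1,2$. This double role of the trigonometric identities is what allows a single statement to cover both integration schemes.
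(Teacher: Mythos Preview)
Your proposal is correct and matches the paper's own proof, which simply states that the result follows directly from \Cref{lem:algo_euler} and \Cref{lem:algo} via ``simple calculations.'' You have spelled out precisely those calculations---the chain rule, the Pythagorean identity for the Lie bracket, and the cancellation of the self-drift terms---so nothing further is needed.
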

\begin{proof}
	This result follows directly from~\Cref{lem:algo_euler} and~\Cref{lem:algo} and simple calculations.
\end{proof}
\begin{rem}
	The pair of vector fields \eqref{eq:vec_simple} with algorithm \eqref{eq:algo_euler}, as commonly employed in continuous-time methods \cite{durr2013lie}, yield an approximation scheme of the form
	\begin{align}
		x_{k+4n} = x_k - h \nabla_x J(x_k) (1+J(x_k)) + \mathcal{O}(\sqrthdz).
	\end{align}
	Hence, this scheme is not a gradient descent step but it is still suitable for optimization if, for example, the objective function is positive semi-definite. $\hfill {\footnotesize \blacktriangle}$
\end{rem}

For the convergence analysis of \eqref{eq:algos}, we impose the following additional assumptions on the cost function $J:\mathbb{R}^n\rightarrow\mathbb{R}$ in \eqref{eq:minJ} 
:
%
%
%
%
\begin{assumption} 
	\label{A2}
	$J(x)$ is radially unbounded and there exists a $x^*\in\mathbb{R}^n$ such that $\gradx{J}{x}^\top (x-x^*)>0$ for all $x\in\mathbb{R}^n \backslash \{x^*\}$. 
	$\hfill {\footnotesize \blacktriangle}$
\end{assumption}
\begin{rem}
	Assumption A2 implies that $x^*$ is the unique global minimizer and there exists no other local minimum. 
	
	$\hfill {\footnotesize \blacktriangle}$
\end{rem}
%
%

The following theorem shows asymptotic convergence to a neighborhood of the minimum $x^*$ of $J(x)$:
\begin{theorem}
	\label{thm:conv}
	Let~\Cref{A1} and~\Cref{A2} hold.
	Consider the discrete-time derivative-free optimization algorithm \eqref{eq:algos} and let the vector fields $\fJ_1,\fJ_2$ satisfy for \eqref{eq:algo_euler} and \eqref{eq:algo} respectively:
	\begin{align}
	-\nabla_xJ(x)&=\left([\fJ_1,\fJ_2] -\nabla_x\fJ_1\fJ_1 - \nabla_x\fJ_2\fJ_2\right)(x), \label{eq:thm2_1} \\
		-\nabla_xJ(x)&=[\fJ_1,\fJ_2](x).\label{eq:thm2_2}
	\end{align}			
	Then for all $0<\delta_2<\delta_1$ there exists an $h^*>0$ such that for all $h\in (0,h^*)$ and  all initial conditions $x_0 \in \mathcal{S}_{x^*}^{\delta_1}$,	$x_k$ converges to $\mathcal{S}_{x^*}^{\delta_2}$.
	$\hfill {\footnotesize \blacktriangle}$
\end{theorem}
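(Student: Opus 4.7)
The plan is to pass to a macro time-scale of $4n$ steps, on which Theorem~1 gives, for both variants of the algorithm,
\begin{align*}
x_{k+4n} = x_k - h\,\nabla_x J(x_k) + \mathcal{O}(\sqrthdz).
\end{align*}
Since Assumption~A2 supplies a sign condition on $(x-x^*)^{\top}\nabla_x J(x)$ rather than on $J(x)$ itself, I would use $V(x):=\tfrac12|x-x^*|^2$ as Lyapunov function and analyse only the subsequence $(x_{4nj})_{j\ge 0}$.

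First I would verify that the intermediate iterates within one block stay controlled. Each of the $4n$ updates in \eqref{eq:algos} displaces $x$ by at most $c_0\sqrth$, where $c_0$ depends on the sup-norms of $\fJ_1,\fJ_2$ (and, in the Heun case, of their first derivatives) on any fixed enlarged ball around $x^*$. Hence, given $\eta>0$, a block beginning at $x_k\in \mathcal{S}_{x^*}^{\delta_1}$ stays inside $\mathcal{S}_{x^*}^{\delta_1+\eta}$ once $h$ is small enough, which is exactly the regime in which the $\mathcal{O}(\sqrthdz)$ bound of Theorem~1 is uniform.

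Expanding $V$ along the macro-step gives
\begin{align*}
V(x_{k+4n})-V(x_k) &= (x_k-x^*)^{\top}(x_{k+4n}-x_k) \\
&\quad + \tfrac12|x_{k+4n}-x_k|^2 \\
&= -h\,(x_k-x^*)^{\top}\nabla_x J(x_k) + \mathcal{O}(\sqrthdz),
\end{align*}
uniformly on the compact annulus $A:=\{x:\delta_2\le|x-x^*|\le \delta_1+\eta\}$. By Assumption~A2 the continuous function $(x-x^*)^{\top}\nabla_x J(x)$ attains a strictly positive minimum $\alpha>0$ on $A$, so shrinking $h^*$ so that the $\mathcal{O}(\sqrthdz)$ term is bounded by $h\alpha/2$ on $A$ yields $V(x_{k+4n})\le V(x_k)-h\alpha/2$ whenever $x_k\in A$. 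Iterating, after at most $\lceil 2V(x_0)/(h\alpha)\rceil$ macro-steps the iterate enters $\mathcal{S}_{x^*}^{\delta_2}$.

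The main obstacle is the final step: showing that, once inside $\mathcal{S}_{x^*}^{\delta_2}$, the iterate essentially remains there. I would shrink $h^*$ further so that (i) the per-step displacement $c_0\sqrth$ is much smaller than $\delta_1-\delta_2$, ensuring that a macro-block begun in $\mathcal{S}_{x^*}^{\delta_2}$ never leaves $\mathcal{S}_{x^*}^{\delta_1}$, and (ii) any excursion into $A$ is immediately reversed by the macro Lyapunov decrease established above. This step is delicate precisely because the remainder $\mathcal{O}(\sqrthdz)$ becomes comparable to the descent term $h\,\nabla_x J(x_k)$ when $x_k$ is close to $x^*$ (where $\nabla_x J$ is small); this is also the reason that only convergence to an arbitrary neighborhood $\mathcal{S}_{x^*}^{\delta_2}$, and not to $x^*$ itself, can be claimed.
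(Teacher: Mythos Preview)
Your overall architecture matches the paper's: reduce to the $4n$-step macro map, bound the intermediate iterates by $O(\sqrt{h})$, establish a uniform Lyapunov decrease on an annulus, and conclude with a trapping argument. The genuine difference is the Lyapunov function. The paper uses $V(x)=J(x)-J(x^*)$, so the descent term is $-h|\nabla_x J(x_k)|^2$ and Assumption~A2 is used only indirectly (to guarantee $\nabla_x J\neq 0$ away from $x^*$); the price is that the sublevel sets of $J$ are not the balls $\mathcal{S}_{x^*}^{\delta}$, which forces the paper to introduce auxiliary level sets $\mathcal{L}_0\supseteq\mathcal{S}_{x^*}^{\delta_0}$ and $\mathcal{L}_3\subseteq\mathcal{S}_{x^*}^{\delta_3}$. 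Your choice $V(x)=\tfrac12|x-x^*|^2$ uses A2 directly and has the balls themselves as level sets, which is cleaner here. (A minor citation point: the macro-step identity you invoke follows from Lemmas~1--2 together with the hypotheses \eqref{eq:thm2_1}--\eqref{eq:thm2_2}; Theorem~1 is the special case of the $\sin/\cos$ vector fields.)

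Where your sketch falls short is the trapping step. Your item~(ii), ``any excursion into $A$ is immediately reversed by the macro Lyapunov decrease,'' does not by itself give eventual containment in $\mathcal{S}_{x^*}^{\delta_2}$: from a point with $|x_k-x^*|$ just below $\delta_2$ the macro step may increase $V$ by $O(h^{3/2})$, landing just outside $\mathcal{S}_{x^*}^{\delta_2}$; the next macro step then decreases $V$, but only below the \emph{current} value, not below $\delta_2^2/2$. So the macro sequence can oscillate across $\partial\mathcal{S}_{x^*}^{\delta_2}$ indefinitely. The paper fixes this with a buffer: it introduces an inner radius $\delta_3<\delta_2$, runs the descent argument until the macro sequence enters $\mathcal{S}_{x^*}^{\delta_3}$, and then chooses $h$ small enough that from any point of $\mathcal{S}_{x^*}^{\delta_3}$ all subsequent iterates (macro and intermediate) remain in $\mathcal{S}_{x^*}^{\delta_2}$. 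With your Lyapunov function the same device works: take your annulus to be $\{\delta_3\le|x-x^*|\le\delta_1+\eta\}$ for some $\delta_3<\delta_2$, conclude the macro sequence reaches $\mathcal{S}_{x^*}^{\delta_3}$, and then use the per-step bound $|x_{k+1}-x_k|\le c_0\sqrt{h}$ together with the macro descent on $\{\delta_3\le|x-x^*|\}$ to confine all later iterates to $\mathcal{S}_{x^*}^{\delta_2}$.
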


The proof of~\Cref{thm:conv} is given in~\Cref{apx:thm_conv_proof}.
Note, that~\Cref{thm:conv} states the semi-global uniform practical asymptotic stability property \cite{durr2013lie} of $\mathcal{S}_{x^*}^{\delta_2}$ of \eqref{eq:algos} under the given assumptions.

\section{SIMULATIONS}\label{sec:simulation}
In this section we study our discrete-time derivative-free optimization algorithms \eqref{eq:algos} in various simulation examples and provide some tuning rules. To this end, we consider the cost function $J(x) = (x - 2)^2 + 6$. If not otherwise specified we use the vector fields \eqref{eq:vec_sincos} and initialize the algorithms with $x_0=x(0) = 0.5$. For the sake of convenience we define this setup as $\mathcal{P}:=\left(J,x_0,\fJ_1,\fJ_2\right)$. Note that the results obtained with such a quadratic cost function also provide insight into the results that would be obtained with an arbitrary function of class $C^2$ near a minimum
with positive definite Hessian.
We start by analyzing \eqref{eq:algos}, applied to the basic setup $\mathcal{P}$, which is depicted for (a large) step size $h=0.5$ in~\Cref{fig:sim_dES}, where the switching of the vector fields $\fJ_1,\fJ_2$ is highlighted. 
\pgfplotsset{minor grid style={dotted}}
As it can be observed, the trajectories of $x_k$ are converging into a neighborhood of $x^*$, which is of order $\mathcal{O}(\sqrth)$ (see proof of~\Cref{thm:conv}).
To eliminate the steady-state oscillatory behavior, we propose the filter $y_{k+1} = y_k + \frac{1}{4n}\sum_{i=k-4n}^{k-1} x_{i+1}-x_{i}$
where $y_0= x_0$ and $x_l = x_0,\ l<0$, which shows a good asymptotic convergence behavior of the algorithms \eqref{eq:algos} to the extremum $x^*$ as illustrated in~\Cref{fig:sim_dES}. 
\begin{figure}[t]
	\centering
	\setlength\figureheight{3.9cm}
	\setlength\figurewidth{7.5cm}
	{\small \input{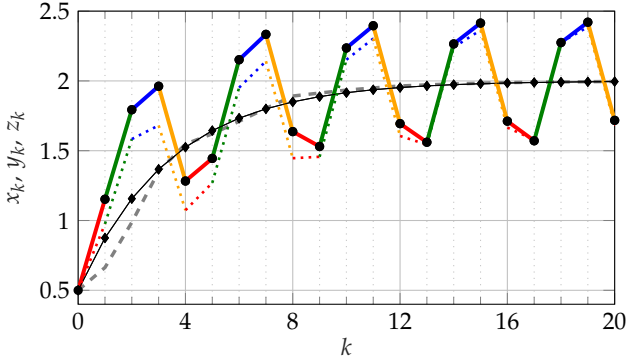}}
	\caption[]{Convergence of $x_k$ in \eqref{eq:algos}  for setup $\mathcal{P}$ with $h=0.5$ to a neighborhood of $x^*=2$. The flow maps as in \eqref{eq:algo_flow} for $M=E$ are highlighted with $E_{\fJ_1}^{\sqrth}$ ({\color{red}$\boldmin$}), $E_{\fJ_2}^{\sqrth}$ ({\color{green}$\boldmin$}),  $E_{-\fJ_1}^{\sqrth}$ ({\color{blue}$\boldmin$}), $E_{-\fJ_2}^{\sqrth}$ ({\color{orange}$\boldmin$}) and the same color scheme (dotted) for \eqref{eq:algo_flow}, $M=H$. This is compared with the trajectory resulting from a gradient descent algorithm with the exact gradient ($\thinmin$). By filtering $x_k$, a good asymptotic convergence behavior of $y_k$ to $x^*$ can be observed ({\color{gray}$\thinmins\,\thinmins$}).}
	\label{fig:sim_dES}
\end{figure}
Obviously, $y_k$ is a windowed filter with length $4n$, which is not fed back into the algorithm.
This behavior can also be observed in a multidimensional optimization problem as depicted in~\Cref{fig:sim_dES2D}.  
\begin{figure}[t]
	\centering
	\setlength\figureheight{5.5cm}
	\setlength\figurewidth{7cm}
	{\small \input{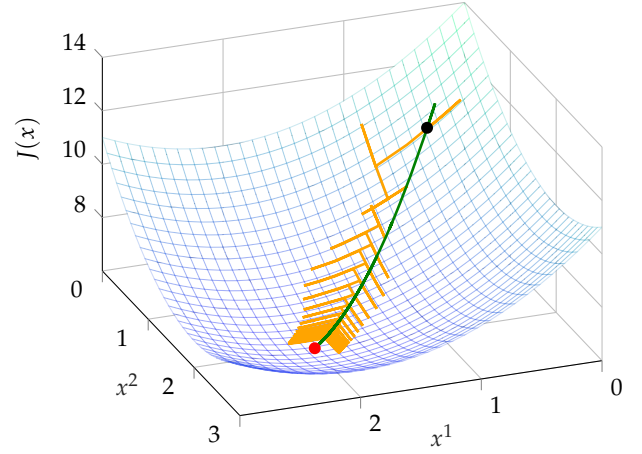}}
	\caption[]{A two-dimensional optimization problem with $J(x)=(x^1-2)^2+(x^2-2)^2+6$. Trajectories $x_k$ ({\color{orange}$\boldmin$}) of \eqref{eq:algo_euler} and $y_k$ ({\color{green}$\boldmin$}) converge to the extremum $x^*=[2,2]$ ($\red{\bullet}$) initialized with $x_0=y_0=[0.5,0.5]$ ($\bullet$).}
	\label{fig:sim_dES2D}
\end{figure}

As known from continuous-time methods, see for example \cite{durr2013lie}, increasing the (switching) frequency, and hence reducing the step size $h$, leads to a convergence into a smaller neighborhood around $x^*$ as depicted for different $h$'s in~\Cref{fig:sim_dES_diff_h}.
Clearly, for decreasing the step size, more iterations are needed, hence more function evaluations.
\begin{figure}[t]
	\centering
	\setlength\figureheight{3.9cm}
	\setlength\figurewidth{7.5cm}
	{\small \input{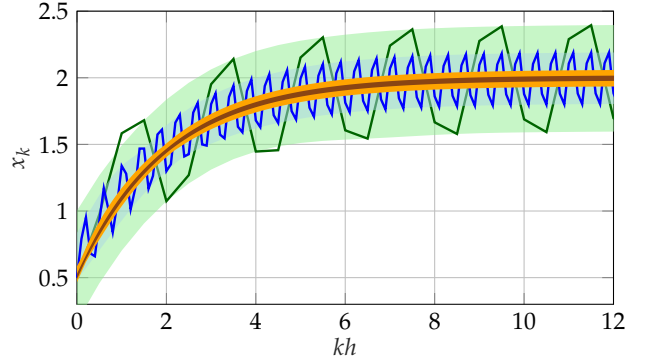}}
	\caption[]{Choosing smaller step sizes for algorithm \eqref{eq:algo_euler} or \eqref{eq:algo} yield to a smaller neighborhood of convergence. Trajectories of $x_k$ for setup $\mathcal{P}$ with step sizes $h=0.5$ ({\color{green}$\boldmin$}), $h=0.1$ ({\color{blue}$\boldmin$}), $h=0.01$ ({\color{orange}$\boldmin$}), $h=0.001$ ({\color{brown}$\boldmin$}).}
	\label{fig:sim_dES_diff_h}
\end{figure}

As mentioned in~\Cref{sec:intro}, an advantage of the proposed class of algorithms is robustness with respect to noisy objective functions.
Therefore, consider the setup $\mathcal{P}$, but with a noisy cost function $\bar{J}:\mathbb{R}\rightarrow \mathbb{R}$ such that
\begin{align}
	\bar{J}(x_k) \sim \mathcal{N}(J(x_k),0.04),
\end{align} 
where $\mathcal{N}(J(x_k),0.04)$ is a normal distribution with mean $J(x_k)$ and standard derivation $\sqrt{0.04}=0.2$. In~\Cref{fig:sim_dES_noise}, algorithm \eqref{eq:algo} is compared with the well known difference quotient method to approximate the gradient $D(J(\bar{x})):= h^{-1}(J(\bar{x}_k)-J(\bar{x}_k+h))$ used in a gradient descent algorithm $\bar{x}_{k+1}=\bar{x}_k-hD(J(\bar{x}))$ with step size $h = 0.01$ and initial condition $\bar{x}_0 = x_0$. 
\begin{figure}[t]
	\centering
	\setlength\figureheight{3.9cm}
	\setlength\figurewidth{7.5cm}
	{\small \input{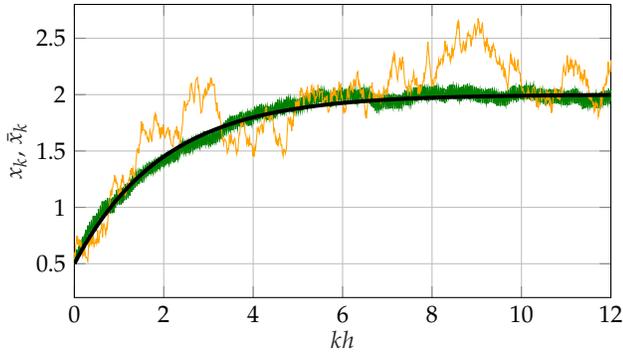}}
	\caption[]{Study of noisy objective function: trajectory $x_k$ of algorithm \eqref{eq:algo} ({\color{green}$\boldmin$}), trajectory $\bar{x}_k$ of gradient descent with gradient approximation via difference quotient ({\color{orange}$\boldmin$}), trajectory of gradient descent with exact gradient ({\color{black}$\boldmin$}).}
	\label{fig:sim_dES_noise}
\end{figure}
Apparently, approximating the gradient via the difference quotient method is much more sensitive regarding noisy objectives as algorithms \eqref{eq:algos}. This is reasoned by the averaging behavior of the proposed algorithms, implied by utilizing non-commutative maps. 

Concluding, we illustrated both algorithms \eqref{eq:algos} on simple examples and we introduced a filtering method. Secondly, we showed the impact of decreasing $h$ and the performance under noisy measurements. Regarding the convergence behavior, algorithm \eqref{eq:algo_euler} and \eqref{eq:algo} show similar convergence behavior for the considered vector fields \eqref{eq:vec_sincos} and switching function \eqref{eq:seq_switch_seq_n}. Nevertheless, algorithm \eqref{eq:algo} needs two times more function evaluations than \eqref{eq:algo_euler}. 

\section{CONCLUSION}\label{sec:conclusions}
In this work we introduced a methodology for the design of novel discrete-time derivative-free optimization algorithms. Based on non-commutative maps, designed by two different 
numerical integration methods and a sequential switching rule, it was shown how to approximate a gradient descent step. Furthermore, practical convergence and stability towards an extremum of the objective function was proven. The algorithms are illustrated using numerical examples. 

The introduced class of algorithms raise several new research questions, which will be part of future work. For example, what are more efficient switching rules in multidimensional optimization problems compared to the introduced coordinate wise switching rule. Moreover, it is interesting to characterize a general class of non-commutative maps which show gradient approximation properties or to study convergence rates or accelerated gradient schemes.

\bibliographystyle{IEEEtran}
\bibliography{IEEEabrv,bibfile}

\appendix
\section{APPENDIX}
\subsection{Proof of~\Cref{lem:algo_euler}}
\label{apx:algo_proof_euler}
We consider the derivative-free optimization algorithm \eqref{eq:algo_euler} and 
assume without loss of generality $k\bmod 4=0$ and $k/4\bmod n = 0$.
Due to our assumption on $k$, $g_k(x_k) = \fJ_1(x_k)e_1$. Since, the switching policy \eqref{eq:seq_switch_seq_n} is based on a coordinate-wise implementation, it is sufficient to analyze the evolution of $x_k$ in only one dimension. Hence, it holds
\begin{align}
	x_{k+1} &= x_k +\sqrth \fJ_1(x_k)e_1 .
\end{align}
 For the sake of readability in the sequel we define $\fJ_{i}^k := \fJ_i(x_k)e_1$ and $\FJ_{i}^k := (\frac{\partial \fJ_i}{\partial x}(x_k))^\top e_1$ and neglect $e_1$. 
 Therefore, the next step is given by 
 \begin{align}
 		x_{k+2} &= x_k + \sqrth (\fJ_1^k+\fJ_2(x_k+\sqrth \fJ_1^k)).
 	\label{eq:Apx1_1}
 \end{align}
Performing a Taylor expansion on $\fJ_2(x_k+\sqrth \fJ_1^k)$ in \eqref{eq:Apx1_1} reveals
\begin{align}
x_{k+2} = x_k + \sqrth(\fJ_1^k+\fJ_2^k)+hF_2^k\fJ_1^k+\mathcal{O}(\sqrthdz),
\label{eq:Apx1_2}
\end{align}
where all higher order terms are pushed in $\mathcal{O}(\sqrthdz)$. Repeating this procedure, as presented above, $x_k$ evolutes as follows:
\begin{align}
x_{k+3} &= x_k + \sqrth \fJ_2^k+h(F_2^k\fJ_1^k-F_1^k\fJ_1^k-F_1^k\fJ_2^k) \nonumber \\&+ \mathcal{O}(\sqrthdz), \\
x_{k+4} &= x_k +h(F_2^k\fJ_1^k-F_1^k\fJ_1^k-F_1^k\fJ_2^k-F_2^k\fJ_2^k)\nonumber \\&+\mathcal{O}(\sqrthdz).
	\label{eq:Apx1_5}
\end{align}
Eventually, repeating the same procedure for dimensions $\{2,3,\ldots,n\}$ delivers the same expression \eqref{eq:Apx1_5} for each dimension, hence \eqref{eq:lem_algo_euler} can be directly concluded.  
$\hfill \square$

\subsection{Proof of~\Cref{lem:algo}}
\label{apx:algo_proof}
As in the proof of~\Cref{lem:algo_euler} we repeat the same procedure and only state the main equations. For notations and assumptions (w.l.o.g) reread~\Cref{apx:algo_proof_euler}.
Therefore it holds
\begin{align}
	x_{k+1} = x_k + \frac{\sqrth}{2}\left(\fJ_1^k+ \fJ_1(x_k+\sqrth \fJ_1^k) \right).
	\label{eq:apx1_1}
\end{align}
Performing a Taylor expansion on $\fJ_1(x_k+\sqrth \fJ_1^k)$ in \eqref{eq:apx1_1} reveals
\begin{align}
x_{k+1} &=  x_k+\sqrth \fJ_1^k+\frac{h}{2}\jacob{F_1^k}\fJ_1^k+\mathcal{O}(\sqrthdz).
\label{eq:apx1_2}
\end{align}
 Repeat the procedure, as presented above, $x_k$ evolutes as follows:
\begin{align}
x_{k+2}&= x_k+\sqrth (\fJ_1^k+\fJ_2^k)\nonumber \\ 			&+\frac{h}{2}\left(\jacob{F_1^k}\fJ_1^k+\jacob{F_2^k}(2\fJ_1^k+\fJ_2^k)\right)+\mathcal{O}(\sqrthdz), \\
x_{k+3}&=x_k +\sqrth \fJ_2^k   + \mathcal{O}(\sqrthdz) \nonumber \\ 
&+ \frac{h}{2}(-2\jacob{F_1^k}\fJ_2^k+2\jacob{F_2^k}\fJ_1^k+\jacob{F_2^k}\fJ_2^k), \\
x_{k+4}&=x_k +  h(\jacob{F_2^k}\fJ_1^k-\jacob{F_1^k}\fJ_2^k) + \mathcal{O}(\sqrthdz) .
	\label{eq:apx1_5}
\end{align}
Eventually, repeating the same procedure for dimensions $\{2,3,\ldots,n\}$ delivers the same expression \eqref{eq:apx1_5} for each dimension, hence \eqref{eq:lem_algo} can be directly concluded.
 $\hfill \square$

\subsection{Proof of~\Cref{thm:conv}}
\label{apx:thm_conv_proof}
Let $0\leq\delta_3\leq\delta_2\leq\delta_1\leq \delta_0$. The proof is separated in: \linebreak[4]1) Define a Lyapunov-like function $V(x)$, such that $V(x_{k+4n})-V(x_k)<0$ for $\mathcal{S}_{x^*}^{\delta_0}\backslash \mathcal{S}_{x^*}^{\delta_3}$, 2) Practical invariance of $\mathcal{S}_{x^*}^{\delta_1}$, 3) Convergence of $x_k$ to $\mathcal{S}_{x^*}^{\delta_2}$ with $x_0 \in \mathcal{S}_{x^*}^{\delta_1}$.

1) Consider the algorithms \eqref{eq:algos} and their evolution 
as given in \eqref{eq:lem_algo_euler} and \eqref{eq:lem_algo} under conditions \eqref{eq:thm2_1}, \eqref{eq:thm2_2}  on vector fields $\fJ_1,\fJ_2$ as stated in~\Cref{thm:conv}. 
W.l.o.g., let the Lagrange remainder of the Taylor expansion  \eqref{eq:lem_algo_euler} and \eqref{eq:lem_algo}, respectively, be of the form $\sqrthdz R_k=\mathcal{O}(\sqrthdz)$, where $R_k\in\mathbb{R}^n$ depends on the vector fields $\mathfrak{f}_1,\mathfrak{f}_2$ and their Jacobians and Hessians. Hence, \eqref{eq:algos} is given by 
\begin{align}
x_{k+4n} = x_{k} - h \nabla_x J (x_{k})+\sqrthdz R_k.
\end{align}
Consider the Lyapunov-like candidate function $V(x_k) = J(x_k)-J(x^*)$, then it holds for all $k\ge 0$,
\begin{align}
V(x_{{k}+4n})&-V(x_{k}) = \nonumber \\
&=J\left(x_{k}-h\nabla_xJ(x_{k})+\sqrthdz R_k\right)-J(x_{k}) \nonumber \\
&=-h\nabla_xJ(x_{k})^\top\nabla_xJ(x_{k})\nonumber \\&~~~~~+ \sqrthdz\nabla_xJ(x_{k})^\top R_k+ \frac{h^2}{2}\gamma ^\top H_k\gamma,
\end{align}
where the last equality is gained by a second order Taylor expansion at $x_k$, 
with the Lagrange reminder $\frac{h^2}{2}\gamma^\top H_k \gamma$, where $\gamma = \nabla_x J(x_{k}) + \sqrthdz R_k$ and $ H_k\in \mathbb{R}^{n\times n}$. $R_k$ and $H_k$  depend on $J,\ \fJ_1,\ \fJ_2$ and their Jacobians and Hessians, Hence, $R_k,H_k$, and $\nabla_x J(x)$ are bounded on every compact set $\mathcal{S}_{x^*}^{\delta_0}$, due to~\Cref{A1}. Therefore, for every $\delta_0$ there exists a $0\leq R_{\delta_0} \in \mathbb{R}$ such that for all $x_{k} \in \mathcal{S}_{x^*}^{\delta_0}$ hold 
\begin{align}
V(x_{k+4n}) - V(x_{k}) \le -h|\nabla_x J(x_{k})|^2 + \sqrthdz R_{\delta_0}.
\end{align} Then under~\Cref{A2}, there exists a $h_1>0$, such that for all $h \in (0,h_1)$ hold 
\begin{align}	
V(x_{k+4n})-V(x_k)\leq-\epsilon \text{\ \ on\ \ } \mathcal{L}=\mathcal{L}_0\backslash \mathcal{L}_3,
\label{eq:lyap_2}
\end{align}
with $\epsilon>0$, $\mathcal{S}_{x^*}^{\delta_0}\backslash \mathcal{S}_{x^*}^{\delta_3} \subseteq \mathcal{L}$ and sub-level sets of $V(x)$, $\mathcal{L}_0 \supseteq \mathcal{S}_{x^*}^{\delta_0}$ and $\mathcal{L}_3 \subseteq \mathcal{S}_{x^*}^{\delta_3}$.

2) By~\Cref{A1} and the first order Taylor expansions, it holds that for all $k\geq 0$, $0<l< 4n$, there exist an upper bound on the Lagrange reminders $0\leq \tilde{R}_{k+l} \in \mathbb{R}$ in $\mathcal{S}_{x^*}^{\delta_0}$ such that 
\begin{align}
V(x_{k+l}) &= V(x_k + \sqrth \sum_{i=k}^{k+l-1}g_i(x_i)) \nonumber \\&\le V(x_k)+\sqrth \tilde{R}_{k+l}
\end{align}
Hence, in combination with 1) there exist a $h_2>0$ such that for all $h \in (0,h_2)$, $x_0 \in \mathcal{S}_{x^*}^{\delta_1} \Rightarrow x_k \in \mathcal{S}_{x^*}^{\delta_0}$ for all $k\geq 0$. 

3) Due to \eqref{eq:lyap_2} with $h_1>0$, it holds that for every initial value $x_0 \in \mathcal{S}_{x^*}^{\delta_1}$, there exist a maximum number of iterations $\bar{k}>0$, such that for all $h\in(0,\min\lbrace h_1,h_2\rbrace)$, it holds $x_{\bar{k}} \in \mathcal{S}_{x^*}^{\delta_3}$.
Furthermore, as in 2), one can show that there exists a $h_3>0$, such that for all $h\in(0,h_3)$ and $x_0 \in \mathcal{S}_{x^*}^{\delta_3},x_k \in \mathcal{S}_{x^*}^{\delta_2}$ for all $k\geq 0$. Thus, for all $h\in(0,\min\lbrace h_1,h_2,h_3\rbrace)$, if $x_0 \in \mathcal{S}_{x^*}^{\delta_1}$, then $x_k$ converges to $\mathcal{S}_{x^*}^{\delta_2}$.~\Cref{fig:proof} illustrates the proof and the introduced compact sets $\mathcal{S}_{x^*}$. \hfill $\square$

\begin{figure}[h]
	\centering
	\begin{tikzpicture}[>=latex]
	\node[fill=orange90!30,semicircle,name=c1, inner sep = 45pt] at (0,30pt) {};
	\node[fill=orange90!10,semicircle,name=c2, inner sep = 40pt] at (0,25pt) {};
	\node[fill=green60!30,semicircle,name=c3, inner sep = 20pt] at (0,5pt) {};
	\node[fill=green60!10,semicircle,name=c4, inner sep = 15pt] at (0,0) {};
		
	\node[fill =red, circle, name=t1, inner sep = 2pt] at (0.4,-0.1) {};
	\node[fill =black, circle, name=t2, inner sep = 1.5pt, above right = 0.3cm and -0.3cm of t1] {};
	\node[fill =black, circle, name=t3, inner sep = 1.5pt, above right = 0.2cm and -0.2cm of t2] {};
	\node[fill =black, circle, name=t4, inner sep = 1.5pt, above right = 0.2cm and -0.6cm of t3] {};
	\node[fill =black, circle, name=t5, inner sep = 1.5pt, above right = -0.4cm and -0.3cm of t4] {};
	\node[fill =black, circle, name=t6, inner sep = 1.5pt, above right = -0.2cm and -0.3cm of t5] {};
	\node[fill =black, circle, name=t7, inner sep = 1.5pt, above right = -0.4cm and -0.2cm of t6] {};
	\node[fill =black, circle, name=t8, inner sep = 1.5pt, above right = -0.4cm and 0.2cm of t7] {};
	\node[fill =red, circle, name=t9, inner sep = 2pt, above right = 0.2cm and 0.2cm of t8] {};
	\node[fill =black, circle, name=t10, inner sep = 1.5pt, above right = 0.6cm and 0.1cm of t9] {};
	\node[fill =black, circle, name=t11, inner sep = 1.5pt, above right = 0.4cm and -0.4cm of t10] {};
	\node[fill =black, circle, name=t12, inner sep = 1.5pt, above right = 0.1cm and -0.4cm of t11] {};
	\node[fill =black, circle, name=t13, inner sep = 1.5pt, above right = -0.1cm and -0.5cm of t12] {};
	\node[fill =black, circle, name=t14, inner sep = 1.5pt, above right = -0.4cm and -0.4cm of t13] {};
	\node[fill =black, circle, name=t15, inner sep = 1.5pt, above right = -0.5cm and -0.1cm of t14] {};
	\node[fill =black, circle, name=t16, inner sep = 1.5pt, above right = -0.1cm and 0.2cm of t15] {};
	\node[fill =red, circle, name=t17, inner sep = 2pt, above right = 0.0cm and 0.2cm of t16] {};
	\node[fill =black, circle, name=t18, inner sep = 1.5pt, above right = 0.6cm and 0.3cm of t17] {};
	\node[fill =black, circle, name=t19, inner sep = 1.5pt, above right = 0.4cm and 0.2cm of t18] {};
	\node[fill =black, circle, name=t20, inner sep = 1.5pt, above right = 0.2cm and -0.4cm of t19] {};
	\node[fill =black, circle, name=t21, inner sep = 1.5pt, above right = -0.1cm and -0.5cm of t20] {};
	\node[fill =black, circle, name=t22, inner sep = 1.5pt, above right = -0.4cm and -0.2cm of t21] {};
	\node[fill =black, circle, name=t23, inner sep = 1.5pt, above right = -0.3cm and 0.1cm of t22] {};
	\node[fill =black, circle, name=t24, inner sep = 1.5pt, above right = -0.3cm and 0.4cm of t23] {};
	\node[fill =green60, circle, name=t25, inner sep = 2pt, above right = 0.0cm and 0.2cm of t24] {};
	
	\node[] at (0,-15pt) {$\bm{\times}$} ;
	\node[] at (-0.25,-9pt) {$x^*$} ;
	\node[] at (0.9,2.2) {$x_0$} ;
	
	%
	\draw[thick] (t1.center) to (t2.center);
	\draw[thick] (t2.center) to (t3.center);
	\draw[thick] (t3.center) to (t4.center);
	\draw[thick] (t4.center) to (t5.center);
	\draw[thick] (t5.center) to (t6.center);
	\draw[thick] (t6.center) to (t7.center);
	\draw[thick] (t7.center) to (t8.center);
	\draw[thick] (t8.center) to (t9.center);
	\draw[thick] (t9.center) to (t10.center);
	\draw[thick] (t10.center) to (t11.center);
	\draw[thick] (t11.center) to (t12.center);
	\draw[thick] (t12.center) to (t13.center);
	\draw[thick] (t13.center) to (t14.center);
	\draw[thick] (t14.center) to (t15.center);
	\draw[thick] (t15.center) to (t16.center);
	\draw[thick] (t16.center) to (t17.center);
	\draw[thick] (t17.center) to (t18.center);
	\draw[thick] (t18.center) to (t19.center);
	\draw[thick] (t19.center) to (t20.center);
	\draw[thick] (t20.center) to (t21.center);
	\draw[thick] (t21.center) to (t22.center);
	\draw[thick] (t22.center) to (t23.center);
	\draw[thick] (t23.center) to (t24.center);
	\draw[thick] (t24.center) to (t25.center);
	
	\node[fill =red, circle, inner sep = 2pt] at (0.4,-0.1) {};
	\node[fill =red, circle, inner sep = 2pt, above right = 0.2cm and 0.2cm of t8] {};
	\node[fill =red, circle, inner sep = 2pt, above right = 0.0cm and 0.2cm of t16] {};
	\node[fill =green60, circle, inner sep = 2pt, above right = 0.0cm and 0.2cm of t24] {};
	
	\node[] at (2.4,-25pt) {$\mathcal{S}_{x^*}^{\delta_1}$};
	\node[] at (3.4,-25pt) {$\mathcal{S}_{x^*}^{\delta_0}$};
	\node[] at (1.4,-25pt) {$\mathcal{S}_{x^*}^{\delta_2}$};
	\node[] at (0.7,-25pt) {$\mathcal{S}_{x^*}^{\delta_3}$};
	\end{tikzpicture}
	\caption{Convergence of $x_k\in\mathbb{R}^2$ ($\bullet$) with initial value $x_0$  ({\color{green60}$\bm \bullet$}) $\in \mathcal{S}_{x^*}^{\delta_1}\subseteq \mathcal{S}_{x^*}^{\delta_0}$ to $\mathcal{S}_{x^*}^{\delta_2}$. For all $k>\tilde{k}$, $x_k$ stays in $\mathcal{S}_{x^*}^{\delta_2}$. Note, every $4n$-th step is marked with {\color{red}$\bm \bullet$}.}
	\label{fig:proof}
\end{figure}
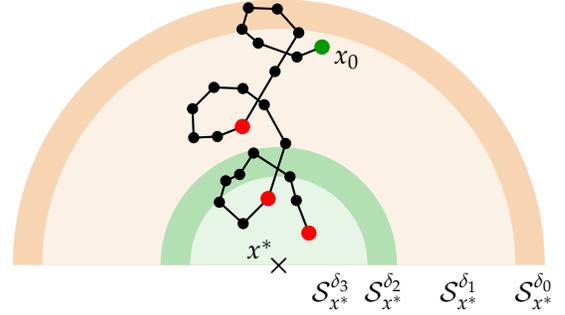

\end{document}